\renewcommand{\leq}{\leqslant}
\renewcommand{\geq}{\geqslant}
\def\build#1_#2^#3{\mathrel{
\mathop{\kern 0pt#1}\limits_{#2}^{#3}}}
\theoremstyle{plain}
\newtheorem{theorem}{Theorem}
\newtheorem{corollary}{Corollary}
\newtheorem{proposition}[corollary]{Proposition}
\theoremstyle{definition}
\theoremstyle{remark}
\begin{document}
\title{Truncated long-range percolation on oriented graphs}
\author{A.\ C.\ D. van Enter\footnote{Johann Bernoulli Instituut, Rijksuniversiteit Groningen, Nijenborgh 9 9747 AG Groningen, The Netherlands}, B.\ N.\ B.\ de Lima\footnote{Departamento de Matem{\'a}tica, Universidade Federal de Minas Gerais, Av. Ant\^onio
Carlos 6627 C.P. 702 CEP 30123-970 Belo Horizonte-MG, Brazil}, D.\ Valesin$^*$}
\date{}
\maketitle


\begin{abstract}
We consider different problems within the general theme of long-range percolation on oriented graphs. Our aim is to settle the so-called truncation question, described as follows. We are given probabilities that certain long-range oriented bonds are open; assuming that the sum of these probabilities is infinite, we ask if the probability of percolation is positive when we truncate the graph, disallowing bonds of range above a possibly large but finite threshold. We give some conditions in which the answer is affirmative. We also translate some of our results on oriented percolation to the context of a long-range contact process.
\end{abstract}
{\footnotesize Keywords: contact processes; oriented percolation; long-range percolation \\
MSC numbers:  60K35, 82B43}

\section{Introduction}

\noindent
Let $G = (\mathbb{V}(G), \mathbb{E}(G))$ be the graph with set of vertices $\mathbb{V} = \mathbb{Z}^d$ and  set of (unoriented) bonds  $\mathbb{E} = \{\langle\vec{x},\vec{x} + i\cdot \vec{e}_m\rangle: \vec{x}\in\mathbb{Z}^d,\; i \in \mathbb{Z},\; m \in \{1,\ldots, d\}\}$, where $\vec{e}_1,\ldots,\vec{e}_d$ denote the vectors in the canonical basis of $\mathbb{Z}^d$. Let $(p_i)_{i=1}^\infty$ be a sequence in the interval $[0,1]$  and consider a Bernoulli bond percolation model where each bond $e \in \mathbb{E}$ is open with probability $p_{\|e\|}$, where $\|e\|$ denotes the $l_\infty$ distance between the two endpoints of $e$.  That is, take $(\Omega, \, \mathcal{A}, \, P)$, where $\Omega = \{0,1\}^{\mathbb{E}}$, $\mathcal{A}$ is the canonical product $\sigma$-algebra, and $P = \prod_{e \in \mathbb{E}} \mu_e$, where $\mu_e({\omega}_e = 1) = p_{\|e\|} = 1- \mu_e({\omega}_e = 0)$. An element $\omega \in \Omega$ is called a percolation configuration.

As usual, the set $\{0\leftrightarrow \infty\}$ denotes the set of configurations such that the origin is connected to infinitely many vertices by paths of open bonds (bonds where $\omega_e=1$). Our principal assumption concerning the sequence $(p_i)_i$ will be
\begin{equation}\label{eq:summability}\sum_{i=1}^\infty p_i =\infty,\end{equation} so that, by Borel-Cantelli Lemma, we have $P\{0\leftrightarrow \infty\}=1$.

We now consider a truncation of the sequence $(p_i)_i$ at some finite range $k$. More precisely, for each $k> 0$ consider the truncated sequence $(p_i^k)_{i=1}^\infty$, defined by
\begin{equation}
p_i^k=\left\{
\begin{array}
[c]{l}%
p_i,\mbox{  if } i\leq k,\\
0,\ \mbox{   if } i>k.
\end{array}\right.\label{eq:truncation}
\end{equation}
and the measure $P^k = \prod_{e \in \mathbb{E}} \mu^k_e$, where
$\mu^k_e({\omega}_e = 1) = p^k_{\|e\|} = 1- \mu^k_e({\omega}_e = 0)$. Then, the \textbf{truncation question} is: does $P^k\{0\leftrightarrow \infty\}>0$ hold for $k$ large enough?

The works \cite{SSV}, \cite{Be}, \cite{FLS}, \cite{FL} and \cite{LS} (in chronological order) give an affirmative answer to the truncation question under different sets of assumptions on the dimension $d$ and  the sequence $(p_i)$. In particular, \cite{FL} gives an affirmative answer for $d\geq 3$ and no assumption on $(p_i)$ other than \eqref{eq:summability}; moreover, this work shows how the analogous question for the long-range Potts model can be studied via a long range percolation model. We would like to mention that the general truncation question for $d=2$ is still open and it is not difficult to see that for $d=1$ the answer is negative.

In the nonsummable situation, the positive answer to the truncation question (in dimensions more than 1) appears to be more robust than in the summable case. Indeed, the presence of first-order transitions in the occupation density, or in  a temperature-like parameter for summable infinite-range models, causes the truncation question to have a negative answer, as observed in \cite{FL}. Although continuity of the transition is known for Ising models, and their associated random-cluster models, in considerable generality (see for example the recent work \cite{ADCS}), this is not the case for independent percolation, where even in $d=3$ it is a famous open question in the nearest-neighbor model, while for $q$-state  Potts models first-order transitions are quite common for $q \geq 3$ (see the references \cite{BCC}, \cite{C} and \cite{GoMe}).

In  this paper, we consider the truncation question in an oriented graph. Let ${\cal G}= (\mathbb{V}({\cal G}),\mathbb{E}({\cal G}))$ be the oriented graph defined as follows. The vertex set is $\mathbb{V}({\cal G})=\mathbb{Z}^d\times\mathbb{Z}_+$, where $\mathbb{Z}_+ = \{0,1,\ldots\}$; elements of $\mathbb{V}({\cal G})$ will be denoted $(\vec{x},n)$, where $\vec{x} \in \mathbb{Z}^d$ and $n \in \mathbb{Z}_+$. The set $\mathbb{E}(\mathcal{G})$ of oriented bonds is
\begin{equation}\{\langle (\vec{x},n),(\vec{x}+i\cdot \vec{e}_m,n+1)\rangle: \vec{x} \in\mathbb{Z}^d,\;n\in\mathbb{Z}_+,\;m\in\{1,\ldots, d\},\; i\in\mathbb{Z}\}.\label{eq:bonds}\end{equation} Again we are given a sequence $(p_i)_{i=1}^\infty$ satisfying \eqref{eq:summability}
and we assume each bond  $\langle (\vec{x},n),(\vec{x}+ i\cdot \vec{e}_m,n+1)\rangle$ is open with probability $p_{i}$ independently of each other. Again denoting by $P$ the probability measure corresponding to this percolation configuration and by $\{(\vec{0}, 0)\leftrightarrow \infty\}$ the event that there exists an infinite open oriented path starting from $(\vec{0},0)$, Borel-Cantelli gives $P\{(\vec{0},0)\leftrightarrow \infty\} = 1$. For each $k> 0$, we then consider the truncated sequence given in \eqref{eq:truncation} and the corresponding measure $P^k$ and ask the truncation question, that is, whether $P^k\{(\vec{0},0)\leftrightarrow \infty\} > 0$. We prove:
\begin{theorem}\label{thm:main_cor} For any $d \geq 2$, if  the sequence $(p_i)_i$ satisfies \eqref{eq:summability}, the truncation question has an affirmative answer for the graph ${\cal G}$. Moreover, $${\displaystyle \lim_{k\to\infty}P^k\{(\vec{0},0) \leftrightarrow \infty \} = 1}.$$
\end{theorem}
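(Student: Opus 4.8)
The plan is to reduce to two spatial dimensions, extract from the hypothesis \eqref{eq:summability} a single small parameter, and then compare the truncated process to a supercritical oriented percolation by a block (renormalization) argument, the whole point of which is that in two or more dimensions the vertices that fail to be reached are too sparse to block the advancing front.

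First I would restrict the admissible bonds to the two directions $\vec{e}_1,\vec{e}_2$, keeping the remaining coordinates equal to $0$; since deleting bonds only decreases connectivity, it suffices to establish percolation for this sub-model, which is nothing but the $d=2$ model on $\mathbb{Z}^2\times\mathbb{Z}_+$, and this handles every $d\geq 2$ at once. The key quantity is
$$q_k=\prod_{i=1}^k(1-p_i),$$
and the assumption \eqref{eq:summability} gives exactly $q_k\to 0$ as $k\to\infty$. Its role is the following elementary estimate. Suppose that at time level $n$ every vertex of a source box $[a_1,b_1]\times[a_2,b_2]\times\{n\}$ has been reached from the origin. Then a target $(\vec{y},n+1)$ lying in the bulk of this box (at $\ell_\infty$-distance more than $k$ from its lateral boundary) fails to be reached only if all the truncated bonds joining it to the horizontal segment $\{(y_1\pm i,y_2,n):1\le i\le k\}$ are closed, an event of probability $q_k^2$; using also the segment along $\vec{e}_2$ lowers this to $q_k^4$. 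Hence each bulk vertex of the next level is reached with probability at least $1-q_k^2\to 1$.

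With this estimate in hand I would set up a renormalized oriented lattice whose sites correspond to space-time blocks, declare a block \emph{good} when the reached region propagates to the neighbouring blocks one renormalized step ahead, and verify that a block is good with probability tending to $1$ as $k\to\infty$. The unreached ``holes'' in a level form a random subset of density at most $q_k^2$; the decisive point, and the one place where $d\geq 2$ is essential, is that in two dimensions such a sparse set cannot disconnect the front: to halt the advance the holes would have to form a blocking barrier across the block, which by a Peierls-type subcriticality estimate has probability exponentially small in the block width once $q_k^2$ lies below the relevant threshold. This is exactly what fails when $d=1$, where a single gap of length $k$ severs the process and such gaps recur, which is the reason the truncation answer is negative there. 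Comparing the resulting finitely-dependent field of good blocks to an independent supercritical oriented percolation, by a standard stochastic-domination argument for finite-range dependent fields, produces an infinite open oriented cluster, and any such renormalized cluster carries an infinite open oriented path of $\mathcal{G}$; this answers the truncation question affirmatively.

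Finally, for the quantitative statement $\lim_{k}P^k\{(\vec{0},0)\leftrightarrow\infty\}=1$ I would argue on two fronts. Since the density of good blocks tends to $1$, the standard fact that the percolation probability of two-dimensional oriented percolation tends to $1$ as its parameter does shows that the block containing the origin lies in the infinite cluster with probability $\to 1$; and a short ``burn-in'' argument shows that the single vertex $(\vec{0},0)$ reaches an adequately filled starting box within a bounded number of steps, again with probability $\to 1$. I expect the main obstacle to be the comparison step itself: the events ``vertex reached at level $n+1$'' are not independent across levels, since they share bonds and depend on the random source set produced one step earlier, so one cannot invoke independent percolation directly. Overcoming this requires building the good-block event from the configuration in a bounded space-time window, so that the field is genuinely finite-range dependent, together with a healing argument guaranteeing that the sparse holes at one level are repaired at the next. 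Making these two points precise, rather than the probabilistic estimate $q_k\to 0$, is where the real work lies.
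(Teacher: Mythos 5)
Your reduction to $d=2$ and the observation that $q_k=\prod_{i=1}^k(1-p_i)\to 0$ under \eqref{eq:summability} are correct, and they match the paper's starting point. But the core of your argument --- the block renormalization --- is not carried out, and the two issues you yourself flag as ``where the real work lies'' are genuine gaps rather than routine verifications. First, locality: the event ``the reached region propagates from this block to the neighbouring blocks'' is not measurable with respect to a bounded space-time window, because which vertices are reached depends on the entire history down to the origin; to invoke a stochastic-domination theorem for finite-range dependent fields you must replace it by a purely local event (for instance ``from \emph{every} sufficiently dense subset of the entry face, a dense subset of the exit face is reached using only bonds inside the block''), and then your probability estimate must be uniform over all admissible entry sets --- a union bound over the exponentially many dense subsets fails, so some additional idea (monotonicity, a worst-case entry set) is required and is not indicated. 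Second, the healing/Peierls step: your bound ``holes have density $q_k^2$'' is derived only under the hypothesis that the \emph{entire} previous level is reached; once holes are present the conditional estimates degrade, and, more importantly, the holes do not form an independent (or even manifestly finite-range dependent) field --- a hole at one level raises the conditional probability of holes in a $k$-neighbourhood at the next level --- so a ``Peierls-type subcriticality estimate'' cannot be invoked until the dependence structure is controlled, which is the same unresolved locality problem in different clothing. As it stands, the proposal is a plan whose decisive steps are missing.

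For contrast, the paper's proof sidesteps density-tracking entirely. It fixes $\beta$ with $p_\beta>0$ and defines, for each $(\vec{x},n)$, a \emph{bifurcation event}: some bond from $(\vec{x},n)$ to a point $(\vec{x}+a\vec{e}_1,n+1)$ is open, and from that point both a bond of length $\beta$ in the $\vec{e}_2$ direction and a further bond in the $\vec{e}_1$ direction are open; by \eqref{eq:summability} this event has $P^k$-probability $\gamma_k\to 1$. Projecting out the $\vec{e}_1$-coordinate, a site $(m,n)\in\mathbb{Z}^2_+$ is declared red using only bonds attached to the line $\{((a,m\beta),2n):a\in\mathbb{Z}\}$ and the two levels above it; distinct projected sites involve disjoint sets of bonds (different $\vec{e}_2$-coordinates or time levels differing by at least $2$), so a dynamic exploration of the red cluster has conditional success probability at least $\gamma_k$ at every step, and Lemma 1 of Grimmett--Marstrand yields domination of i.i.d.\ oriented site percolation with parameter $\gamma_k$. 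The whole point of taking the union over the $\vec{e}_1$-position is that one never needs the reached set to be dense --- only to contain \emph{one} point of each relevant line --- which is precisely what dissolves your locality and healing difficulties. If you want to salvage your approach, the cleanest route is to adopt this ``forget the horizontal coordinate'' device rather than to try to make the dense-front renormalization rigorous.
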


This theorem is proved in the next section. In Section \ref{s:other}, we will treat a related question for the contact process and also for a different oriented graph.

\section{Proof of Proposition \ref{prop:anis}}\label{s:proof_main}
We obtain Theorem \ref{thm:main_cor} as an immediate consequence of a stronger result, which we now describe. We fix $d = 2$ and consider $\mathcal{G}$ defined as above, with vertex set $\mathbb{Z}^2\times \mathbb{Z}_+$ and set of oriented bonds given in \eqref{eq:bonds}. We take two sequences $(p_i)$, $(q_i)$ and now prescribe that bonds of the form $\langle (\vec{x}, n), (\vec{x} + i\cdot \vec{e}_1,n+1) \rangle$ are open with probability $p_i$ and bonds of the form $\langle (\vec{x}, n), (\vec{x} + i\cdot \vec{e}_2,n+1) \rangle$ are open with probability $q_i$. The truncated measure $P^k$ is obtained by truncating both sequences $(p_i)_i$ and $(q_i)_i$ at range $k$.
\begin{proposition}
\label{prop:anis}
If $(p_i)_{i=1}^\infty$ satisfies \eqref{eq:summability} and $(q_i)_{i=1}^\infty$ is not identically zero, then  ${\displaystyle \lim_{k\rightarrow\infty}P^k\{(\vec{0},0) \leftrightarrow \infty\}=1}$.
\end{proposition}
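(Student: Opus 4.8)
The plan is to fix once and for all an integer $r \geq 1$ with $q_r > 0$ (which exists because $(q_i)_i$ is not identically zero) and to compare the truncated model, for $k$ large, with a highly supercritical two-dimensional oriented percolation obtained by a block construction. The rich direction $\vec{e}_1$, in which $\sum_i p_i = \infty$, will be used only to manufacture robustness, while the single scale $r$ in the direction $\vec{e}_2$ provides genuine directed progress. Throughout, call a site \emph{wet} if it is joined to $(\vec{0},0)$ by an open oriented path.

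First I would isolate two one-step estimates. (A) Horizontal spreading: from a single wet site $(\vec{x},n)$, the number of its wet $\vec{e}_1$-neighbours at time $n+1$ is a sum of independent Bernoulli variables of total mean at least $2\sum_{i=1}^{k}p_i$, which tends to $\infty$ as $k\to\infty$ by \eqref{eq:summability}; since the variance is at most the mean, Chebyshev's inequality shows that for every fixed $L$ the probability of obtaining at least $L$ wet $\vec{e}_1$-neighbours tends to $1$ as $k\to\infty$, uniformly in $(\vec{x},n)$. (B) Vertical propagation: given $L$ wet sites sharing a common $\vec{e}_2$-coordinate, the probability that at least one of their $+r\,\vec{e}_2$-bonds is open is at least $1-(1-q_r)^{L}$, and the $+r$ and $-r$ families of bonds are disjoint, hence independent. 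Combining (A) and (B), over two time steps a single wet site produces, with probability at least $\rho_k:=1-\varepsilon(k)$ where $\varepsilon(k)\to 0$, a wet site whose $\vec{e}_2$-height is increased by $r$ and, independently, one whose height is decreased by $r$.

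Next I would set up the renormalization. I tile space-time by boxes indexed by the oriented square lattice, where the two oriented directions correspond to advancing two time steps while shifting the $\vec{e}_2$-coordinate by $+r$ or by $-r$, and where the $\vec{e}_1$-extent $W$ of each box is taken large compared with $k$, so that the internal spreading of (A) does not leave the box. Declaring a box \emph{good} when an internal event of type (A)$+$(B) succeeds—carrying at least $L$ wet sites present in the central region of the incoming face to at least $L$ wet sites in the central region of each outgoing face—the good boxes dominate a finite-range dependent supercritical oriented percolation with parameter at least $\rho_k\to 1$. By the standard stochastic-comparison theorem for finite-range dependent fields this dominates a genuinely independent oriented percolation whose parameter tends to $1$, and whose survival probability from the origin therefore tends to $1$. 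Since the initial seed of $\ge L$ wet sites near $(\vec{0},0)$ itself forms with probability tending to $1$ by (A), an infinite open path exists with probability tending to $1$, as claimed. The two-directional structure is essential: a one-dimensional renormalized chain, however close to $1$ its step probability, dies out almost surely, whereas two-dimensional oriented percolation with parameter near $1$ survives with probability near $1$.

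The step I expect to be the main obstacle is making the comparison genuinely finite-range dependent despite the long-range bonds and the fact that the horizontal position of a seed is random and drifts. The clean way around this is exactly the fixed-box formulation above: by insisting that the good-box event be measurable with respect to bonds strictly internal to the box, distinct boxes use disjoint bonds and the renormalized field inherits the required (in)dependence, uniformly in the unknown $\vec{e}_1$-location of the seed by translation invariance. This forces one to propagate a whole population of at least $L$ wet sites rather than a single one—so that no union bound over the order-$k$ possible entry columns is needed—and the quantitative content reduces to showing that, starting from $\ge L$ central wet sites, after the spreading step (A) the population stays within the central region (using $W\gg k$ and the order-$k$ bound on the two-step displacement) and remains of size $\ge L$ after the vertical step (B). Controlling this escape-and-survival event, and choosing $L=L_k\to\infty$ slowly enough that all the high-probability estimates hold simultaneously, is the only place where some care with the scaling of the box dimensions against $k$ is required.
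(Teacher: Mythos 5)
Your preparatory estimates (A) and (B), and the overall plan---horizontal spreading in the $\vec{e}_1$-direction to create many wet sites, one vertical bond of a fixed length to make directed progress, comparison with highly supercritical two-dimensional oriented percolation---are exactly the mechanism of the paper's proof (its ``bifurcation events'' are your (A)$+$(B) packaged over two time steps). The genuine gap is in the step you yourself single out as the main obstacle, and your proposed resolution of it does not work. You want the good-box event to be measurable with respect to bonds internal to a box of $\vec{e}_1$-width $W$, so that distinct boxes are independent and a Liggett--Schonmann--Stacey-type comparison applies. But the renormalized lattice does not record the $\vec{e}_1$-position of the wet population, so an internal event can certify the induction step only if it guarantees propagation from \emph{every} admissible entry configuration in the incoming face; contrary to your claim, passing from single seeds to populations of size $L$ does not remove this quantifier. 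Concretely, call an entry site childless if all internal bonds leaving it are closed: the childless events at distinct entry sites are independent (disjoint bond sets), each has probability at least $c\prod_{i\leq W}(1-p_i)^2\geq c\,e^{-C\sum_{i\leq W}p_i}$, so the number of childless sites in a face of width $W$ dominates a Binomial$\bigl(cW,\,c\,e^{-C\sum_{i\leq W}p_i}\bigr)$ variable. Take $p_i=1/(i\log i)$, which satisfies \eqref{eq:summability}: then $\sum_{i\leq W}p_i\sim\log\log W$, the mean number of childless sites is of order $W/(\log W)^{C}\to\infty$, and with probability tending to $1$ the face contains $L$-element sets from which propagation is impossible. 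Hence the internal good-box event has probability tending to $0$, not $1$, for every choice of $W$, $L$, $k$: the entry-location entropy always beats the available concentration once $\sum_{i\leq W}p_i=o(\log W)$. If instead you define goodness through the actual wet sites (so that the event only has to handle the true, history-dependent entry population), the event is no longer internal, the field of good boxes is not finite-range dependent, and the stochastic-comparison theorem you invoke does not apply.

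This is precisely the difficulty the paper's proof is built to avoid, and it avoids it without boxes. The ``red'' event attached to a renormalized site $(m,n)$ is a union over the \emph{entire} horizontal line $\{((a,m\beta),2n):a\in\mathbb{Z}\}$ of $\{$connected to that point$\}\cap\{$bifurcation there$\}$; bifurcation events based at distinct rows, or at times differing by at least $2$, use disjoint bonds and are independent. The domination is then obtained not from an independence-of-boxes comparison but from a conditional-probability bound along a dynamic exploration of the red cluster: the parent's bifurcation delivers a connection from the origin to \emph{some} (unknown, history-dependent) point of the next line, and the bifurcation events on that line use only bonds the exploration has not yet examined, giving $P^k(x_i\mbox{ red}\mid\mbox{history})\geq\gamma_k$ uniformly; Lemma 1 of \cite{GM} then yields domination of Bernoulli oriented site percolation with parameter $\gamma_k\to1$. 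To repair your argument you would have to replace the internal-box/LSS framework by such an exploration argument with events indexed by whole lines---at which point you have reconstructed the paper's proof.
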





\begin{proof}
By assumption, we can fix $\beta\in\mathbb{N}$ such that $q_\beta > 0$.

We will define certain \textit{bifurcation events} which will imply that a point $(\vec{x},n)$ is connected to two new points $(\vec{y},n+2)$ and $(\vec{z},n+2)$. For each $(\vec{x},n) \in {\cal G}$, define the \textit{bifurcation event}
$$E_{(\vec{x},n)} = \bigcup_{a,a'\in \mathbb{Z}}\left\{\begin{array}{l}\omega_{\langle (\vec{x},n), (\vec{x} + a\vec{e}_1, n+1) \rangle} \\=\omega_{\langle (\vec{x} + a\vec{e}_1, n+1),(\vec{x}+a\vec{e}_1 + \beta \vec{e}_2, n+2) \rangle}\\=\omega_{\langle (\vec{x} + a\vec{e}_1, n+1),(\vec{x}+a\vec{e}_1 + a'\vec{e}_1, n+2) \rangle}=1 \end{array} \right\}.$$
We have
\begin{equation*}
\label{eq:probE}
P^k(E_{(\vec{x},n)}) = 1 - \prod_{a: |a|\leq k}\left( 1 - p_{|a|} \cdot q_{\beta} \cdot \left( 1- \prod_{a': |a'|\leq k} (1-p_{|a'|})\right)\right)=\gamma_k,
\end{equation*}
which can be made arbitrarily close to 1 by increasing $k$, by \eqref{eq:summability}.
Also note that
\begin{equation}
\label{eq:inclusion_E}
\{(\vec{0},0) \leftrightarrow (\vec{x},n)\} \cap E_{(\vec{x},n)} \subseteq \bigcup_{a, a'\in \mathbb{Z}_+} \left\{\begin{array}{l}(\vec{0},0) \leftrightarrow (\vec{x} + a \vec{e}_1 + a'\vec{e}_1, n+2),\\ (\vec{0},0) \leftrightarrow (\vec{x} + a \vec{e}_1 + \beta\vec{e}_2, n+2) \end{array}\right\}.
\end{equation}
Finally, under $P$ and $P^k$, $E_{((a,b),m)}$ and $E_{((a',b'),n)}$ are independent and identically distributed as soon as either $b\neq b'$ or $|m-n| \geq 2$.

The next step is to prove that, if $k$ is large enough, a certain projection of the $k$-truncated process dominates an oriented supercritical Bernoulli percolation on $\mathbb{Z}^2_+$. Define the following order in $\mathbb{Z}^2_+$: given $(m_1,n_1),(m_2,n_2)\in\mathbb{Z}^2_+$ we say that $(m_1,n_1)\prec(m_2,n_2)$ if and only if $n_1<n_2$ or $(n_1=n_2\mbox{ and }m_1<m_2)$. Given $X\subset\mathbb{Z}^2_+$, we define the exterior boundary of $X$ as the set $$\partial_e X=\{(m,n)\in \mathbb{Z}^2_+\backslash X: (m,n-1)\in X\mbox{ or }(m-1,n-1)\in X\}.$$
We define the vertex $(m,n)\in\mathbb{Z}^2_+$ as {\em red} if and only if the following event occurs: $\bigcup_{a\in\mathbb{Z}}\left(\{(\vec{0},0) \leftrightarrow((a,m\beta),2n) \}\cap E_{((a,m\beta),2n)}\right)$.

\begin{figure}[htb]
\begin{center}
\setlength\fboxsep{0pt}
\setlength\fboxrule{0pt}
\fbox{\includegraphics[width = 0.9\textwidth]{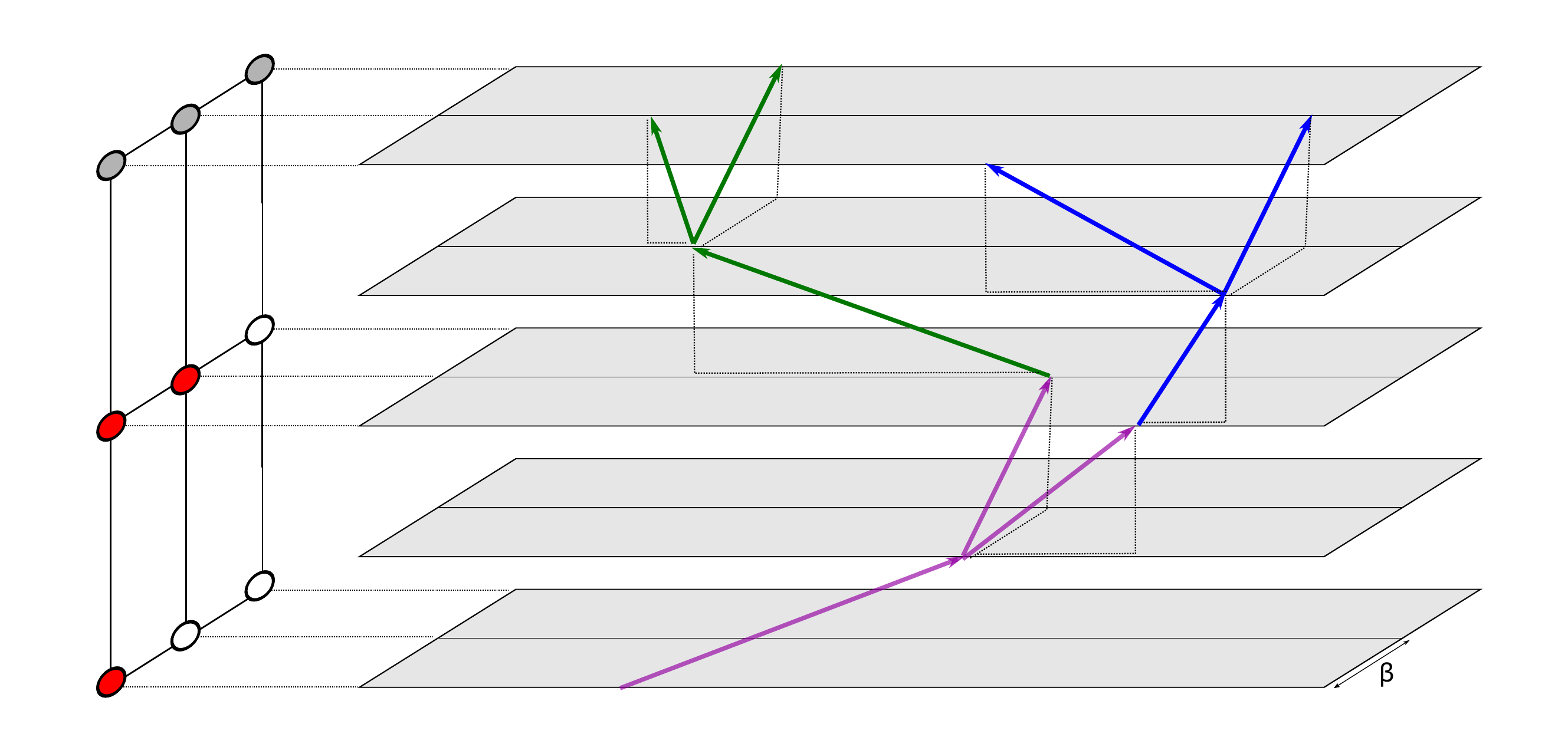}}
\caption{The occurrence of each bifurcation event is represented by a triple of arrows with the same color. On the left side of the picture, we represent a certain projection which will be defined from these events: red vertices will appear at the (projected) starting points of bifurcations. With the information available in the picture, it is impossible to tell whether or not the three vertices on top are red.}
\end{center}
\end{figure}

To avoid confusion, let us emphasize that, if a vertex in $\mathbb{Z}^2_+$ has coordinates $(m,n)$, then this vertex is defined as red through an event in the original lattice $\mathbb{Z}^2 \times \mathbb{Z}_+$; this event involves a bifurcation with some starting point in the line $\{((a, m\beta), 2n): a \in \mathbb{Z} \}$.  In particular, in Figure 1, one horizontal unit and one vertical unit in the lattice depicted on the left correspond respectively to $\beta$ units and $2$ units in the lattice on the right.

We will construct a red cluster dynamically, defining inductively two sequences $(A_i)_i$ and $(B_i)_i$ of subsets of $\mathbb{Z}^2_+$. Set $A_0=B_0=\emptyset$ and $x_0=(0,0)$. Assuming $A_j,B_j$ and $x_j$ have been defined for $j=0,\dots,i$, we let
\begin{align*}
&A_{i+1} =
\begin{cases}
A_i\cup\{x_i\},&\mbox{  if } x_i\mbox{ is red},\\
A_i, &\mbox{   otherwise},
\end{cases}\qquad B_{i+1}=\begin{cases}
B_i,&\mbox{  if } x_i\mbox{ is red},\\
B_i\cup\{x_i\},& \mbox{  otherwise.}
\end{cases}
\end{align*}
Now, if $(\partial_e A_{i+1})\backslash B_{i+1} = \emptyset,$ we stop our recursive definition. Otherwise we let $x_{i+1}$ be the minimal point of $(\partial_e A_{i+1})\backslash B_{i+1}$ with respect to the order $\prec$ defined above, and continue the recursion. Regardless of whether or not the recursion ever ends, we let    ${\cal C}$ be the union of all sets $A_i$ that have been defined. It follows from \eqref{eq:inclusion_E} that $\{|{\cal C}| =\infty\} \subseteq \{(\vec{0},0) \leftrightarrow \infty\}$.

Now, observe that
$$P^k(x_{i}\mbox{ is red} \mid (A_j,B_j): 0 \leq j \leq i)\geq\gamma_k.$$
This implies that ${\cal C}$ stochastically dominates the cluster of the origin in Bernoulli oriented site percolation on $\mathbb{Z}^2_+$ with parameter $\gamma_k$ (see Lemma 1 of \cite{GM}). As noted earlier, $\gamma_k$ can be made arbitrarily close to 1; this proves that ${\displaystyle \lim_{k\to\infty} P^k(|{\cal C}|=\infty) = 1}$.
\end{proof}

\section{Contact process and oriented percolation on other graphs}
\label{s:other}
\subsection{The Contact Process}\label{contato}
Here we will give a counterpart of Theorem \ref{thm:main_cor} for the contact process obtained from truncating an infinite set of rates. Let us define precisely the model that we have in mind. We are given a sequence of non-negative real numbers, $(\lambda_i)_{i=1}^\infty$. We take a family of independent Poisson point processes on $[0,\infty)$:
\begin{itemize}
\item a process $D^{\vec{x}}$ of rate 1 for each $\vec{x} \in \mathbb{Z}^d$;
\item a process $B^{(\vec{x},\vec{y})}$ of rate $\lambda_{|i|}$ for each ordered pair $(\vec{x}, \vec{y})$ with $\vec{x} \in \mathbb{Z}^d$ and $\vec{y} = \vec{x} + i\cdot \vec{e}_m$ with $i \in \mathbb{Z}$ and $m \in \{1,\ldots, d\}$.
\end{itemize}
We view each of these processes as a random discrete subset of $[0,\infty)$ and write, for $0\leq a < b$, $D^{\vec{x}}_{[a,b]} = D^{\vec{x}} \cap [a, b]$ and $B^{(\vec{x},\vec{y})}_{[a,b]} = B^{(\vec{x},\vec{y})} \cap [a,b]$.

Fix $k \in \mathbb{N}$. Given $\vec{x}, \vec{y} \in \mathbb{Z}^d$ and $0 \leq s \leq t$, we say $(\vec{x},s)$ and $(\vec{y},t)$ are $k$-connected, and write $(\vec{x},s) \stackrel{k}{\leftrightarrow} (\vec{y},t)$, if there exists a function $\gamma:[s,t] \to \mathbb{Z}^d$ that is right-continuous, constant between jumps and satisfies:
\begin{align*}\begin{array}{ll}\gamma(s) = \vec{x},\;\gamma(t) = \vec{y} \;\text{ and, for all } r \in [s,t], &\gamma(r) \notin D^{\gamma(r)},\\ & r \in B^{(\gamma(r-),\gamma(r))} \text{ if } \gamma(r) \neq \gamma(r-),\\
&|\gamma(r) - \gamma(r-)| \leq k .\end{array} \end{align*}
We then define
$$\xi_{t,k}(\vec{x}) = I\{(\vec{0},0) \stackrel{k}{\leftrightarrow} (\vec{x},t)\},\qquad \vec{x} \in \mathbb{Z}^d,\; t \geq 0.$$ $(\xi_{t,k})_{t \geq 0}$ is then a Markov process on the space $\{0,1\}^{\mathbb{Z}^d}$ for which the configuration that is identically equal to 0 (denoted here by $\underline{0}$) is absorbing. In case $\lambda_i > 0$ only for $i =1$, $(\xi_{t,1})$ is the contact process of Harris (\cite{harris}).

\begin{theorem}
For all $d\geq 2$, if $\sum_{i=1}^\infty \lambda_i = \infty$, then $$ \lim_{k \to \infty} P\left(\xi_{t,k} \neq \underline{0} \text{ for all } t \right) = 1.$$
\end{theorem}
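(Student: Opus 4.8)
The plan is to transport the bifurcation-and-exploration scheme of the proof of Proposition~\ref{prop:anis} to the graphical (Harris) representation of the truncated contact process. First I would reduce to $d=2$: the $d$-dimensional process contains, as a sub-family of its Poisson processes, the death processes $D^{\vec x}$ with $\vec x\in\mathbb{Z}^2\times\{\vec 0\}$ together with the birth processes in the directions $\pm\vec e_1,\pm\vec e_2$. Any $k$-connection realized using only these marks (that is, with $\gamma$ confined to the plane) is also a $k$-connection of the full process, so $P(\xi_{t,k}\neq\underline{0}\ \forall t)$ for the full model is bounded below by the corresponding probability for the planar model, and it suffices to treat $d=2$. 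Fixing $\beta$ with $\lambda_\beta>0$ and a time unit $T>0$, I would work on the time skeleton $\{2nT\}$ and attach to each infected space-time point $(\vec x,2nT)$ a \emph{bifurcation event} on the window $[2nT,(2n+2)T]$ in the role of $E_{(\vec x,n)}$: the source reaches a platform $\vec x+c\vec e_1$ through one of its outgoing $\vec e_1$-arrows, and from that platform an $\vec e_2$-arrow of displacement $\beta$ yields one child while the platform's own continuation yields the other, so that an infected source at vertical coordinate $m\beta$ produces infected sites at coordinates $m\beta$ and $(m+1)\beta$ at time $(2n+2)T$. Declaring $(m,n)\in\mathbb{Z}^2_+$ red exactly as before then projects the process onto oriented site percolation with forward neighbours $(m,n+1)$ and $(m+1,n+1)$, and the inclusion $\{|\mathcal{C}|=\infty\}\subseteq\{\xi_{t,k}\neq\underline{0}\ \forall t\}$ holds as in the percolation case.

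The probability estimate is where the continuous-time model departs most from the percolation one, since the source may die before branching. The role of the divergent sum $\sum_i p_i$ is now played by the total outgoing infection rate $\Lambda_k=\sum_{1\le|c|\le k}\lambda_{|c|}$, which diverges by the hypothesis $\sum_i\lambda_i=\infty$. I would condition on the first death time $\tau$ of the source inside the window: given $\tau$, the events ``the $c$-arrow fires before $\min(\tau,(2n+1)T)$, the platform $\vec x+c\vec e_1$ survives the window, it emits a $\beta$-arrow whose target survives, and its continuation survives'' are independent across $c$, because they involve the distinct Poisson processes attached to distinct sites; each has conditional probability at least $(1-e^{-\lambda_{|c|}(r\wedge T)})\kappa$, where $r=\tau-2nT$ and $\kappa>0$ is a constant depending only on $T$ and $\lambda_\beta$. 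Hence the conditional bifurcation probability equals $1-\prod_{c}\bigl(1-(1-e^{-\lambda_{|c|}(r\wedge T)})\kappa\bigr)$. Restricting to the event $\{\tau\ge 2nT+s_0\}$, which has probability $e^{-s_0}$ independently of $k$, and using that $\sum_{|c|\le k}(1-e^{-\lambda_{|c|}s_0})\to\infty$ (because $1-e^{-\lambda s_0}\ge c'\min(\lambda,1)$ and $\sum_c\min(\lambda_{|c|},1)=\infty$ whenever $\sum_i\lambda_i=\infty$), this product tends to $0$. Thus the block probability is at least $e^{-s_0}(1-o_k(1))$, and letting $s_0\downarrow0$ after $k\to\infty$ gives a marginal red density $\gamma_k^{\mathrm{cp}}\to1$.

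It then remains to run the dynamic exploration of the red cluster and dominate it by a supercritical oriented percolation, as in Proposition~\ref{prop:anis}. The one genuinely new point, and the step I expect to be the main obstacle, is the independence structure. In the percolation model distinct bifurcations used disjoint bonds, so same-level red vertices were exactly independent; in the contact process the \emph{death marks} are shared. On each horizontal line $e_2=m\beta$ there are two infected sites arriving at time $(2n+2)T$ — the $\beta$-child of $(m-1,n)$ and the continuation-child of $(m,n)$ — and since the horizontal coordinate is unconstrained these may coincide, so their survival along that line couples the events of horizontally adjacent red vertices. The red field is therefore only finite-range (indeed $1$-)dependent, not independent. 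I would resolve this either by arranging the block events so that the common line is inspected in disjoint time sub-intervals by the two neighbours, or, more robustly, by replacing the appeal to Lemma~1 of \cite{GM} with a domination-by-product-measures argument in the spirit of the method of Liggett, Schonmann and Stacey: since $\gamma_k^{\mathrm{cp}}\to1$, for $k$ large the $1$-dependent red field stochastically dominates a supercritical Bernoulli oriented percolation, which percolates. Together with the inclusion of the first paragraph this yields $\lim_{k\to\infty}P(\xi_{t,k}\neq\underline{0}\ \forall t)=1$.
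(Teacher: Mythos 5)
Your proposal has the same architecture as the paper's proof: reduce to $d=2$, discretize time, attach to each infected space-time point a bifurcation event in the Harris representation (the source survives, reaches a platform through a long $\vec{e}_1$-arrow, and the platform sends a $\lambda_b$-arrow in the $\vec{e}_2$-direction), make the block probability close to $1$ by first shrinking the time window and then letting $k\to\infty$, and project onto oriented site percolation on $\mathbb{Z}^2_+$. The paper's block is a single interval $[n\delta,(n+1)\delta]$ split into two halves, with probability computed exactly as $e^{-\delta}\bigl(1-\prod_{a=-k}^{k}(\cdots)\bigr)$; your conditioning on the first death time and the cutoff $s_0$ is a slightly heavier version of the same estimate, and your remark that $\sum_i\min(\lambda_i,1)=\infty$ is the same summability point. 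Where you genuinely diverge is the final comparison step, and there your treatment is more careful than the paper's. The paper declares this step ``almost identical'' to Proposition~\ref{prop:anis}, i.e.\ the exploration with the conditional bound $P^k(x_i\text{ red}\mid (A_j,B_j))\geq \gamma_k$ and Lemma~1 of \cite{GM}. But, as you point out, the independence property that makes that exploration work in Proposition~\ref{prop:anis} --- every bond used by $E_{(\vec{x},n)}$ is rooted at a vertex whose second coordinate equals $x_2$, so same-level events on distinct lines use disjoint bonds --- has no exact analogue in the Harris picture: death marks are attached to sites, so the events rooted at lines $mb$ and $(m+1)b$ both read the death processes on line $(m+1)b$ in the same time window, and conditioning on a failure at $(m,n)$ can bias those marks against $(m+1,n)$. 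The red field is therefore (given the past) only $1$-dependent, and your repair --- Liggett--Schonmann--Stacey domination of a $1$-dependent field of density tending to $1$ by a supercritical product field, applied level by level conditionally on the configuration at time $t_n$ --- is a standard and correct way to finish. In short, your proof is correct, follows the paper's route, and is actually more rigorous at the one point where the paper's phrase ``almost identical'' glosses over a real (though repairable) difference between the two models.
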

\begin{proof}
It is enough to prove the case $d=2$. Fix $\delta > 0$ and $k \in \mathbb{Z}_+$. Let $t_n = n\delta$, for $n \in \{0,1,\ldots\}$. Fix $b$ such that $\lambda_{b} > 0$.

For $\vec{x}\in \mathbb{Z}^d$ and $n \in \mathbb{Z}_+$, let $F_{(\vec{x}, n)}$ be the event
$$\{D^{\vec{x}}_{[t_n,t_{n+1}]} = \varnothing\}\cap \bigcup_{a \in \mathbb{Z}} \left\{\begin{array}{l} D^{\vec{x} + a \vec{e}_1}_{[t_n,t_{n+1}]} = D^{\vec{x} + a \vec{e}_1 + b \vec{e}_2}_{[t_n,t_{n+1}]} = \varnothing,\\[.4cm] B^{(\vec{x}, \vec{x} + a \vec{e}_1)}_{[t_n, t_n + \delta/2]}\neq \varnothing,\;  B^{(\vec{x}+a\vec{e}_1, \vec{x} + a \vec{e}_1 + b\vec{e}_2)}_{[t_n+\delta/2, t_{n+1}]} \neq \varnothing \end{array} \right\}.$$
Then,
$$P^k(F_{(\vec{x},n)}) = e^{-\delta} \left(1- \prod_{a=-k}^k \left(1- e^{-2\delta}\cdot (1-e^{-\frac{\lambda_{|a|} \delta}{2}}) \cdot (1-e^{-\frac{\lambda_{|b|}\delta}{2}})\right)\right).$$
By first taking $\delta$ small and then taking $k$ large, the probability of these events can be made arbitrarily close to 1. Moreover,
$$\{\xi_{t_n,k}(\vec{x}) = 1\} \cap F_{(\vec{x},n)} \subseteq \bigcup_{a \in \mathbb{Z}}\left\{\begin{array}{l}\xi_{t_{n+1},k}(\vec{x}+a\vec{e}_1) \\=\xi_{t_{n+1},k}(\vec{x} + a \vec{e}_1 + b \vec{e}_2) =1\end{array} \right\}.$$

The proof is then completed with a comparison with oriented percolation almost identical to the one that established Proposition \ref{prop:anis}.
\end{proof}

\subsection{Other Oriented Graphs}
In this section we consider a graph ${\cal G^*}=(\mathbb{V}({\cal G^*}),\mathbb{E}({\cal G^*}))$. Once more, the vertex set is $\mathbb{V}({\cal G^*})=\mathbb{Z}^d\times\mathbb{Z}_+,\ d\geq 1$. The set of bonds  $\mathbb{E}({\cal G^*})$ consists of two disjoint subsets; one of them, denoted $\mathbb{E}_v$, only contains oriented bonds, and the other, $\mathbb{E}_h$, only unoriented bonds. These subsets are given by
\begin{align*}&\mathbb{E}_v=\{\langle (\vec{x},n),(\vec{x},n+1)\rangle: \vec{x}\in\mathbb{Z}^d,n\in\mathbb{Z}_+\},\\&\mathbb{E}_h=\{\langle\vec{x},n),(\vec{x}+i\cdot \vec{e}_m,n)\rangle: \vec{x}\in\mathbb{Z}^d,\;n\in\mathbb{Z}_+,\;i\in\mathbb{Z},\;m\in\{1,\dots,d\}\}.\end{align*}
That is, we are considering  the hypercubic lattice where there are nearest neighbour, oriented bonds along the vertical direction and long range, unoriented bonds parallel to all other coordinate axes.

We consider an anisotropic oriented Bernoulli percolation on this graph. Given $\epsilon\in(0,1)$ and a sequence $(p_i)_{i=1}^\infty$ in the interval $[0,1]$, each bond $e\in\mathbb{E}$ is open with probability $\epsilon$ or $p_{\|e\|}$, if $e\in\mathbb{E}_v$ or $e\in\mathbb{E}_h$, respectively.

Given two vertices $(\vec{x},m)$ and $(\vec{y},n)$ with $m < n$, we say that $(\vec{x},n)$ and $(\vec{y},m)$ are connected if there exists a path $$\langle (\vec{x},n)=(\vec{x}_0,n_0),(\vec{x}_1,n_1),\dots,(\vec{x}_s,n_s)=(\vec{y},m)\rangle$$ such that $\langle (\vec{x}_i,n_i),(\vec{x}_{i+1},n_{i+1})\rangle\in\mathbb{E}_h$ or ($\vec{x}_i=\vec{x}_{i+1}$ and $n_{i+1}=n_i+1$) for all $i=0,\dots,s-1$, and the bonds $\langle (\vec{x}_i,n_i),(\vec{x}_{i+1},n_{i+1})\rangle$ are open for all $i=0,\dots,s-1$. That is, all allowed paths use vertical bonds only in the upward direction. We use the notation $\{(\vec{0},0)\stackrel{*}{\leftrightarrow}\infty\}$ to denote the set of configurations in which there is an infinite open path starting at $(\vec{0},0)$. We use also the notations $P$ and $P^k$ to denote the non-truncated and the truncated (in the range $k$) probability measures, respectively.

\begin{theorem}\label{hex} For any $d\geq 2$, any $\epsilon >0$ and any sequence $(p_i)_{i=1}^\infty$ such that $\sum_{i\in\mathbb{N}} p_i =\infty$, we have ${\displaystyle \lim_{k\rightarrow\infty}P^k\{(\vec{0},0)
\stackrel{*}{\leftrightarrow} \infty\}=1}$.
\end{theorem}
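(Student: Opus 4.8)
The plan is to follow the proof of Proposition \ref{prop:anis} almost verbatim, only replacing its oriented bifurcation events by events adapted to $\mathcal{G}^*$. Since removing every horizontal bond not parallel to $\vec{e}_1$ or $\vec{e}_2$ can only decrease connectivity, it suffices to treat $d=2$: its connection probability lower-bounds the one for any $d\geq2$. Because $\sum_i p_i=\infty$ there are infinitely many positive $p_i$, so fix $\beta\in\mathbb{N}$ with $p_\beta>0$. The one genuinely new feature of $\mathcal{G}^*$ is that every upward move must cross a vertical bond, which is open only with the fixed probability $\epsilon<1$; hence no single climbing attempt succeeds with probability near $1$. The device that overcomes this is to let the climb happen above one of many horizontal long-range candidates, so that the constant factor $\epsilon$ enters only as a multiplier of a divergent series.

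Concretely, for $(\vec{x},n)\in\mathbb{V}(\mathcal{G}^*)$ I would set
$$E_{(\vec{x},n)}=\bigcup_{c\,:\,1\leq|c|\leq k}\left\{\begin{array}{l}\omega_{\langle(\vec{x},n),\,(\vec{x}+c\vec{e}_1,n)\rangle}=\omega_{\langle(\vec{x}+c\vec{e}_1,n),\,(\vec{x}+c\vec{e}_1,n+1)\rangle}\\[.15cm]=\omega_{\langle(\vec{x}+c\vec{e}_1,n+1),\,(\vec{x}+c\vec{e}_1+\beta\vec{e}_2,n+1)\rangle}=1\end{array}\right\},$$
that is, for some horizontal range $c$ we open the $\vec{e}_1$-bond of range $|c|$ at height $n$ (probability $p_{|c|}$), the vertical bond above its far endpoint (probability $\epsilon$), and the $\vec{e}_2$-bond of range $\beta$ at height $n+1$ (probability $p_\beta$). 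The triples indexed by distinct $c$ use disjoint bonds, so
$$P^k\big(E_{(\vec{x},n)}\big)=1-\prod_{c\,:\,1\leq|c|\leq k}\big(1-\epsilon\,p_\beta\,p_{|c|}\big);$$
call this quantity $\gamma_k$. Since $\sum_{1\leq|c|\leq k}\epsilon\,p_\beta\,p_{|c|}=\epsilon\,p_\beta\sum_{1\leq|c|\leq k}p_{|c|}$ diverges as $k\to\infty$ by \eqref{eq:summability}, we obtain $\gamma_k\to1$. The small but fixed weight $\epsilon\,p_\beta$ survives precisely as a constant in front of a divergent sum, which is the crux.

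On $E_{(\vec{x},n)}$ the vertex $(\vec{x},n)$ is joined, by an admissible path (horizontal $\vec{e}_1$-step, upward vertical step, horizontal $\vec{e}_2$-step), to both $(\vec{x}+c\vec{e}_1,n+1)$ and $(\vec{x}+c\vec{e}_1+\beta\vec{e}_2,n+1)$, giving the inclusion analogous to \eqref{eq:inclusion_E}. I would then call $(m,n)\in\mathbb{Z}^2_+$ \emph{red} when $\bigcup_{a}\big(\{(\vec{0},0)\stackrel{*}{\leftrightarrow}((a,m\beta),n)\}\cap E_{((a,m\beta),n)}\big)$ occurs, so that a red vertex produces red candidates at $(m,n+1)$ and $(m+1,n+1)$, exactly the offspring structure of oriented site percolation on $\mathbb{Z}^2_+$. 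The decisive design choice is that the sideways displacement is realised by a horizontal $\vec{e}_2$-bond performed \emph{after} the climb, at height $n+1$: the bonds determining redness of $(m,n)$ then live only at heights $n,n+1$ on the $\vec{e}_2$-lines $m\beta$ and $(m+1)\beta$, and the line $(m+1)\beta$ is touched solely by one endpoint of the terminal $\vec{e}_2$-bond. A short case check over the few neighbouring situations then shows that the bonds defining redness of distinct vertices of $\mathbb{Z}^2_+$ are pairwise disjoint; in particular the terminal $\vec{e}_2$-bond of $(m,n)$ cannot be confused with any bond of the neighbour $(m+1,\cdot)$, whose bonds on line $(m+1)\beta$ are vertical, horizontal-$\vec{e}_1$, or the $\vec{e}_2$-bond joining $(m+1)\beta$ to $(m+2)\beta$. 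This is exactly the dependency that the corresponding step of Proposition \ref{prop:anis} had to rule out.

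With the bound $\gamma_k\to1$ and this disjointness in hand, the conclusion is identical to that of Proposition \ref{prop:anis}: exploring the red cluster dynamically and using that each freshly examined vertex is red with conditional probability at least $\gamma_k$, the red cluster $\mathcal{C}$ stochastically dominates the open cluster of the origin in supercritical oriented site percolation on $\mathbb{Z}^2_+$ with parameter $\gamma_k$ (Lemma 1 of \cite{GM}), while $\{|\mathcal{C}|=\infty\}\subseteq\{(\vec{0},0)\stackrel{*}{\leftrightarrow}\infty\}$. Letting $k\to\infty$ yields the claim. I expect the main obstacle to be precisely the tension highlighted above: forcing the bifurcation probability to tend to $1$ despite the fixed vertical density $\epsilon$, while keeping the induced events on $\mathbb{Z}^2_+$ independent enough for the comparison. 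Both are resolved by the same two devices, namely unioning the climb over the many horizontal ranges $c$, and deferring the sideways $\vec{e}_2$-step to the top of the climb.
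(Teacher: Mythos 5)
Your proof is correct, but it is genuinely different from the one the paper gives for Theorem \ref{hex}: you transplant the bifurcation machinery of Proposition \ref{prop:anis} to $\mathcal{G}^*$, whereas the paper takes another route entirely. The paper's proof builds a staircase $\upgamma$ in the horizontal plane (alternating $\vec{e}_1$ and $-\vec{e}_2$ steps), defines events $H_{m,n}$ that consecutive staircase points at height $n$ are joined by an open path lying entirely inside one horizontal line, and imports the nontrivial one-dimensional truncation fact $\lim_{k\to\infty}P^k(H_{m,n})=1$ from the proof of Theorem 6 of \cite{FL}; it then groups $2N$ consecutive line segments, together with at least one open vertical bond in each half, into block variables $\zeta(a,n)$ that are \emph{exactly} independent, and compares with oriented site percolation. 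Your construction instead replaces that one-dimensional input by single-bond triples (an $\vec{e}_1$-bond of range $c$, the vertical bond above its far endpoint, an $\vec{e}_2$-bond of fixed range $\beta$ with $p_\beta>0$), and the union over $1\leq|c|\leq k$ of these disjoint triples gives $\gamma_k=1-\prod_{c:1\leq|c|\leq k}\bigl(1-\epsilon\, p_\beta\, p_{|c|}\bigr)\to 1$, since the fixed factor $\epsilon p_\beta$ multiplies a divergent sum; your case check that the bond sets attached to distinct vertices of $\mathbb{Z}^2_+$ are pairwise disjoint is accurate (in particular the terminal $\vec{e}_2$-bond, joining lines $m\beta$ and $(m+1)\beta$ at height $n+1$, cannot coincide with any bond of a neighbouring vertex), so the dynamic exploration and the domination via Lemma 1 of \cite{GM} go through exactly as in Proposition \ref{prop:anis}, at the same level of rigor as the paper's own argument there. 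As for what each approach buys: yours is self-contained (no appeal to \cite{FL}) and in fact proves an anisotropic strengthening in the spirit of Proposition \ref{prop:anis}, since it only uses nonsummability along $\vec{e}_1$ plus a single positive probability along $\vec{e}_2$; the paper's approach yields exactly independent block events (so no exploration or domination lemma is needed at that stage) and showcases how the unoriented long-range lines of $\mathcal{G}^*$ can be exploited through one-dimensional connectivity, which is the feature linking this theorem to Theorem 6 of \cite{FL}.
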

A weaker result was proven in \cite{FL} (see Theorem 6 therein) in the context of non-oriented and isotropic percolation. The proof of Theorem \ref{hex} is inspirated by the proof thereof (\cite{FL}).

\begin{proof}
It is sufficient to prove the theorem for $d = 2$.

Let $\upgamma: \mathbb{Z} \to \mathbb{Z}^2$ be the function satisfying
$$\upgamma(0) = \vec{0}, \qquad \upgamma(m+1)- \upgamma(m) = \begin{cases} \vec{e}_1&\text{if $m$ is even,}\\-\vec{e}_2&\text{if $m$ is odd.}\end{cases}$$

Define the events
$$H_{m,n} = \left\{\begin{array}{l}\text{$(\upgamma(m),n)$ and $(\upgamma(m+1),n)$ are connected }\\
\text{by a path of open bonds of $\mathbb{E}_h$ that is }\\\text{entirely contained in the line that contains}\\\text{$(\upgamma(m),n)$ and $(\upgamma(m+1),n)$}\end{array}\right\},\; m \in \mathbb{Z},\;n\in \mathbb{Z}_+.$$
Clearly, $P^k(H_{m,n}) = P^k(H_{0,0})$ for all $m,n$. Also note that, if $(m_1, n_1) \neq (m_2,n_2)$, then the line that contains $(\upgamma(m_1),n_1)$ and $(\upgamma(m_1 + 1), n_1)$ does not share any bonds of $\mathbb{E}_h$ with the line that contains $(\upgamma(m_2),n_2)$ and $(\upgamma(m_2+1), n_2)$. Hence, the events $H_{m,n}$ defined above are independent. Moreover, we have
\begin{equation}\label{eq:one_dimension}
\lim_{k \to \infty} P^k(H_{m,n}) = 1
\end{equation}
(a proof of this can be found in the first few lines of the proof of Theorem 6 in \cite{FL}).

Now, fix $\epsilon > 0$ and $\delta > 0$. Let $N$ be an integer satisfying $(1-(1-\epsilon)^{N})^2 > 1-\delta/2$. Then, using \eqref{eq:one_dimension}, choose $k > 0$ such that $(P^k(H_{0,0}))^{2N} > 1 - \delta/2$. Then let
$$\Lambda_0 = \left\{(a,n)\in \mathbb{Z} \times \mathbb{Z}_+: a + n \text{ is even} \right\}.$$
For each $(a,n) \in \Lambda_0$, let $\zeta(a,n)$ be the indicator function of the event
\[\left(\bigcap_{m=aN}^{aN + 2N -1}H_{m,n} \right) \cap \left( \bigcup_{m=aN}^{aN + N -1} \left\{\langle (\Gamma(m),n),(\Gamma(m),n+1)\rangle \text{ is open}\right\} \right)\cap\]
\[\left( \bigcup_{m=aN+N}^{aN + 2N -1} \left\{\langle (\Gamma(m),n),(\Gamma(m),n+1)\rangle \text{ is open}\right\} \right).\]

Then, the elements of the sequence of random variables $(\zeta(a,n))_{(a,n) \in \Lambda_0}$ are independent and, by the choice of $N$, each of them is equal to 1 with probability $1-\delta$. Now note that an infinite sequence $(a_i)_{i =0}^\infty$ such that $a_0=0$, $|a_{i+1} - a_{i}| = 1$ and $\zeta(a_i,i) = 1$ for each $i$ necessarily corresponds to an infinite open path in $G$. Moreover, the probability of existence of such a sequence can be taken arbitrarily close to 1 since $\delta$ is arbitrary.
\end{proof}

\section*{Acknowledgements}

This work was done during B.N.B.L.'s sabbatical stay at IMPA; he would like to thank Rijksuniversiteit Groningen and IMPA for their hospitality. The research of B.N.B.L. was supported in part by CNPq and FAPEMIG (Programa Pesquisador Mineiro).

\end{document}